\documentclass[11pt]{article}

\usepackage[T1]{fontenc}
\usepackage{amsmath,amssymb,amsthm,mathtools}
\usepackage[margin=1in]{geometry}
\usepackage{booktabs}
\usepackage[colorlinks=true,linkcolor=blue,citecolor=blue,urlcolor=blue]{hyperref}

\allowdisplaybreaks

\newtheorem{theorem}{Theorem}[section]
\newtheorem{proposition}[theorem]{Proposition}
\newtheorem{lemma}[theorem]{Lemma}
\newtheorem{corollary}[theorem]{Corollary}
\newtheorem{remark}[theorem]{Remark}
\theoremstyle{definition}
\newtheorem{definition}[theorem]{Definition}

\DeclareMathOperator{\diss}{diss}

\DeclareMathOperator{\Row}{Row}
\DeclareMathOperator{\Tr}{Tr}
\newcommand{\F}{\mathbb{F}}
\newcommand{\Mat}{\mathrm{M}}
\newcommand{\GL}{\mathrm{GL}}
\newcommand{\cS}{\mathcal{S}}
\newcommand{\ind}{\mathbf{1}}
\newcommand{\qbinom}[2]{\genfrac{[}{]}{0pt}{}{#1}{#2}_{q}}

\title{Laplacian Bounds for the Dissociation Number of\\
Regular Graphs of Matrix Rings}
\author{Joyentanuj Das\footnote{Department of Mathematics, College of Engineering and Technology, SRM Institute of Science and Technology, Kattankulathur, Chennai 603203, India. Emails: joyentanuj@gmail.com, joyentad@srmist.edu.in.}}
\date{}

\begin{document}

\maketitle

\begin{abstract}
Let $\Gamma_n(q)$ be the graph whose vertices are the invertible matrices in $\Mat_n(\F_q)$, with two distinct matrices adjacent whenever their sum is singular.  A dissociation set is a vertex set inducing a graph of maximum degree at most one.  We study the dissociation number of $\Gamma_n(q)$ by embedding it as an induced subgraph of the total graph $T_n(q)$ on all of $\Mat_n(\F_q)$.  A general Laplacian inequality for $k$-independent sets, together with an explicit character computation for the additive group of the matrix ring, gives parity-sensitive upper bounds.  For fixed $n$, the resulting bound is of order at most $q^{n^2-n+1}$ for odd $q$ and at most $q^{n^2-2n+2}$ for even $q$.  In particular,
\[
  \diss(\Gamma_n(q))\le q^{n^2-n+1}-1.
\]
In the other direction, the regular representation of the extension field
$\F_{q^n}$ gives $\diss(\Gamma_n(q))\ge q^n-1$.  We give complete proofs,
including a self-contained derivation of the required matrix character sum,
and determine the smallest case: $\diss(\Gamma_2(2))=3$.
\end{abstract}

\noindent\textbf{Keywords.}
Dissociation number; $k$-independence; matrix ring; finite field; Laplacian eigenvalue; Cayley sum graph.

\medskip
\noindent\textbf{2020 Mathematics Subject Classification.}
Primary 05C50; Secondary 05C25, 05C69, 15B33.

\section{Introduction}

Graphs attached to rings provide a meeting point for algebra, combinatorics,
and spectral graph theory.  Among the best-known examples are zero-divisor
graphs, unitary Cayley graphs, total graphs, and regular graphs.  Anderson and
Badawi \cite{anderson-badawi} introduced the total graph of a commutative
ring: its vertices are all ring elements, and distinct elements are adjacent
when their sum is a zero-divisor.  Dol\v{z}an and Oblak
\cite{dolzan-oblak} extended this construction to finite, possibly
noncommutative rings.  A broad account of graphs defined from ring-theoretic
data can be found in \cite{anderson-book}.

The present paper concerns the full matrix ring
$\Mat_n(\F_q)$ over the finite field $\F_q$.  Its zero-divisors are exactly
the singular matrices, while its regular elements are exactly the
invertible matrices.  The associated regular graph, denoted here by
$\Gamma_n(q)$, has vertex set $\GL_n(\F_q)$; two distinct vertices $A$ and
$B$ are adjacent when $A+B$ is singular.  The word ``regular'' in this name
comes from regular elements of a ring, although $\Gamma_n(q)$ also happens
to be regular in the graph-theoretic sense.

Several classical graph parameters of matrix-ring regular graphs have been
investigated.  Akbari, Jamaali, and Seyed Fakhari
\cite{akbari-clique} proved a bound on their clique number that is independent
of the field size when $q$ is odd.  Akbari, Aryapoor, and Jamaali
\cite{akbari-subgraphs} considered clique and chromatic numbers of related
subgraphs.  Tomon \cite{tomon} established lower and upper bounds for the
chromatic number; in particular, for odd $q$ he obtained
\[
  \chi(\Gamma_n(q))\ge (q/4)^{\lfloor n/2\rfloor}.
\]
Bardestani and Mallahi-Karai \cite{bardestani} used algebraic and
representation-theoretic methods to study chromatic numbers of structured
Cayley graphs related to these matrix graphs.

Nica \cite{nica} shifted attention to the independence number and proved
\begin{equation}\label{eq:nica-bound-intro}
  \alpha(\Gamma_n(q))\le q^{n^2-n+1}.
\end{equation}
The key idea is to pass from $\Gamma_n(q)$ to the total graph $T_n(q)$ on
all matrices.  Although $T_n(q)$ need not be regular when $q$ is odd, its
additive group is abelian, and its Laplacian spectrum can be computed by
finite-field characters.  This avoids the much harder character theory of
$\GL_n(\F_q)$.

Our parameter is the dissociation number.  A dissociation set in a graph is
a vertex set inducing maximum degree at most one; its maximum cardinality is
denoted by $\diss(G)$.  Equivalently, the induced graph is a disjoint union
of isolated vertices and independent edges.  This is the special case
$k=1$ of the $k$-independence number $\alpha_k(G)$ studied, for example, by
Caro and Hansberg \cite{caro-hansberg} and Kogan \cite{kogan}.

The dissociation problem first appeared in the vertex-deletion framework of
Yannakakis \cite{yannakakis}, who showed that its decision version is hard
even on bipartite graphs.  Boliac, Cameron, and Lozin
\cite{boliac-cameron-lozin} strengthened and refined the complexity results
for bipartite graph classes.  More recently, Bock, Pardey, Penso, and
Rautenbach studied relations among dissociation, independence, and induced
matchings \cite{bock-relating}, improvements of the elementary inequality
$\diss(G)\le2\alpha(G)$ on structured graph classes
\cite{bock-independence}, and lower bounds in terms of elementary graph
parameters \cite{bock-bound}.  These works are predominantly structural or
algorithmic; our purpose is to obtain spectral upper bounds for a dense
algebraically defined family.

The inequality $\diss(G)\le2\alpha(G)$ and
\eqref{eq:nica-bound-intro} immediately give
\[
  \diss(\Gamma_n(q))\le2q^{n^2-n+1}.
\]
The central observation of this paper is that one should apply the
Laplacian method directly to a dissociation set instead.  Such a set of
order $d$ spans at most $d/2$ edges.  In the Laplacian cut calculation this
replaces the minimum degree $\delta$ by $\delta-1$, and it removes the
factor $2$ from the preceding estimate.  Retaining the exact largest
Laplacian eigenvalue also reveals a stronger exponent in even
characteristic.

\paragraph{Contributions.}
The main contributions are the following.
\begin{enumerate}
\item We prove the general inequality
\[
  \alpha_k(G)\le |V(G)|
  \left(1-\frac{\delta(G)-k}{\lambda_{\max}(G)}\right),
\]
where $\lambda_{\max}(G)$ is the largest Laplacian eigenvalue.
\item We give a detailed character-theoretic computation of
$\lambda_{\max}(T_n(q))$.  The largest eigenvalue is produced by a rank-one
character for odd $q$, but by a rank-two character for even $q$.
\item We obtain explicit upper bounds for $\diss(T_n(q))$ and hence for
$\diss(\Gamma_n(q))$, together with the uniform estimate
$q^{n^2-n+1}-1$ and parity-sensitive asymptotics.
\item We construct a dissociation set of order $q^n-1$ in
$\Gamma_n(q)$ from the embedding $\F_{q^n}\hookrightarrow\Mat_n(\F_q)$,
and we determine $\diss(\Gamma_2(2))$ exactly.
\end{enumerate}

\paragraph{Organization.}
The paper is organized into five sections.  Section~\ref{sec:notation}
introduces the notation and states the main results.  Section~\ref{sec:lower}
gives the extension-field construction for the lower bound.
Section~\ref{sec:spectral} develops the Laplacian method, evaluates the
required character sums, and proves the upper bounds.  Finally,
Section~\ref{sec:examples} presents small examples, determines the exact
value for $\Gamma_2(2)$, and summarizes the conclusions of the paper.

\section{Notations, preliminary facts, and main results}\label{sec:notation}

All graphs in this paper are finite, simple, and undirected.  For a graph
$G$, its vertex and edge sets are $V(G)$ and $E(G)$, and its order is
$v(G)=|V(G)|$.  If $D\subseteq V(G)$, then $G[D]$ denotes the subgraph
induced by $D$, and
\[
  e_G(D)=|E(G[D])|.
\]
If $D$ and $C$ are disjoint vertex sets, $e_G(D,C)$ is the number of edges
with one endpoint in $D$ and the other in $C$.  The degree of a vertex $u$
is $d_G(u)$, and the minimum and maximum degrees are $\delta(G)$ and
$\Delta(G)$.

Let $A_G$ be the adjacency matrix of $G$, let
$D_G=\operatorname{diag}(d_G(u):u\in V(G))$ be the diagonal degree matrix,
and let
\[
  L_G=D_G-A_G
\]
be the combinatorial Laplacian.  Its eigenvalues are real and nonnegative.
We write $\lambda_{\max}(G)$ for its largest eigenvalue and $\ind$ for the
all-ones vector.  The basic identities
\begin{equation}\label{eq:laplacian-basic}
  L_G\ind=0,
  \qquad
  x^{\mathsf T}L_Gx
  =\sum_{\{u,w\}\in E(G)}(x_u-x_w)^2
\end{equation}
will be used repeatedly.

\begin{definition}
For an integer $k\ge0$, a set $D\subseteq V(G)$ is $k$-independent if
$\Delta(G[D])\le k$.  The maximum cardinality of such a set is the
$k$-independence number $\alpha_k(G)$.  Thus
\[
  \alpha_0(G)=\alpha(G)
  \qquad\text{and}\qquad
  \alpha_1(G)=\diss(G).
\]
\end{definition}

The following elementary comparison explains why dissociation is a natural
relaxation of independence.

\begin{lemma}\label{lem:alpha-diss}
For every graph $G$,
\[
  \alpha(G)\le\diss(G)\le2\alpha(G).
\]
\end{lemma}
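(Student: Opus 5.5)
The lower bound is immediate from the definitions. An independent set $I$ of $G$ induces a graph with no edges, so $\Delta(G[I])=0\le1$, and $I$ is a dissociation set. Taking $I$ to be a maximum independent set gives $\alpha(G)\le\diss(G)$. Equivalently, every $0$-independent set is $1$-independent, so $\alpha_0(G)\le\alpha_1(G)$.

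For the upper bound, I will fix a maximum dissociation set $D$, so $|D|=\diss(G)$, and use the structure of $G[D]$. Since $\Delta(G[D])\le1$, the induced subgraph $G[D]$ is a disjoint union of isolated vertices and independent edges $\{u_1,w_1\},\dots,\{u_m,w_m\}$, forming an induced matching. Let $S$ be the set of isolated vertices of $G[D]$. I will form $I$ by taking $S$ together with exactly one endpoint, say $u_i$, from each edge.

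I then check that $I$ is independent. Any edge of $G$ with both ends in $I\subseteq D$ is an edge of $G[D]$, and every edge of $G[D]$ is one of the $\{u_i,w_i\}$. Since $I$ contains only one endpoint of each such edge, no edge of $G$ lies inside $I$.

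The counting is straightforward: $|D|=|S|+2m$ and $|I|=|S|+m$, so
\[
  \diss(G)=|S|+2m\le 2(|S|+m)=2|I|\le 2\alpha(G).
\]
No step here is a genuine obstacle. The only point that needs care is that the edges of $G[D]$ are pairwise disjoint, which is exactly the degree condition $\Delta(G[D])\le1$.
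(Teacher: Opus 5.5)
Your proof is correct and follows essentially the same route as the paper's proof: independent sets are dissociation sets, and for the upper bound you keep all isolated vertices of $G[D]$ plus one endpoint of each edge, giving an independent set of size at least $|D|/2$.
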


\begin{proof}
Every independent set induces maximum degree zero, so it is a dissociation
set.  This gives $\alpha(G)\le\diss(G)$.

For the reverse inequality, let $D$ be a dissociation set.  Every component
of $G[D]$ is either an isolated vertex or a single edge.  Suppose that there
are $s$ isolated vertices and $t$ edge components.  Choose all $s$ isolated
vertices and one endpoint from each of the $t$ edges.  The selected vertices
form an independent set in $G$: vertices from different components of
$G[D]$ have no edges between them, and only one endpoint was selected from
each edge component.  Hence
\[
  \alpha(G)\ge s+t\ge\frac{s+2t}{2}=\frac{|D|}{2}.
\]
Taking $D$ maximum yields $\diss(G)\le2\alpha(G)$.
\end{proof}

We next fix the algebraic notation.  Let $q$ be a prime power, let $\F_q$
be the field with $q$ elements, and let $n\ge2$.  We write
\[
  \Mat_n(q)=\Mat_n(\F_q),
  \qquad
  \GL_n(q)=\GL_n(\F_q).
\]
The set of singular matrices is
\[
  \cS_n(q)=\{X\in\Mat_n(q):\det X=0\}.
\]
Its cardinality will be denoted by
\begin{equation}\label{eq:z-definition}
  z_n(q)=|\cS_n(q)|=q^{n^2}-|\GL_n(q)|.
\end{equation}
For $1\le r\le n$, define
\begin{equation}\label{eq:a-definition}
  a_r(n,q)=q^{n(n-1)/2}\prod_{i=1}^{n-r}(q^i-1),
\end{equation}
where an empty product is $1$.  When no confusion can arise, we abbreviate
$z=z_n(q)$ and $a_r=a_r(n,q)$.

There are two graphs in the paper.
\begin{align*}
 V(\Gamma_n(q))&=\GL_n(q),\\
 V(T_n(q))&=\Mat_n(q),
\end{align*}
and in both graphs distinct matrices $A,B$ are adjacent if and only if
$A+B\in\cS_n(q)$.  Since the adjacency rules agree,
\begin{equation}\label{eq:induced-subgraph}
  \Gamma_n(q)=T_n(q)[\GL_n(q)].
\end{equation}

We record two elementary matrix counts.

\begin{lemma}\label{lem:matrix-counts}
For $n\ge2$,
\begin{align}
 |\GL_n(q)|
 &=\prod_{j=0}^{n-1}(q^n-q^j)
 =q^{n^2}\prod_{i=1}^{n}(1-q^{-i}),                         \label{eq:GL-count}\\
 z_n(q)&>q^{n^2-1}.                                          \label{eq:z-lower}
\end{align}
Moreover, for fixed $n$ and $q\to\infty$,
\begin{equation}\label{eq:z-asymptotic}
  z_n(q)=q^{n^2-1}(1+O_n(q^{-1})).
\end{equation}
\end{lemma}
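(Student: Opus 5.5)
The plan has three parts, one for each claim in Lemma~\ref{lem:matrix-counts}.

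\emph{The count \eqref{eq:GL-count}.} I will count invertible matrices row by row. The first row can be any nonzero vector, which gives $q^n-1$ choices. Once rows $1,\dots,j$ are chosen and linearly independent, they span a subspace with $q^j$ elements. Row $j+1$ must lie outside this span, so it has $q^n-q^j$ choices. Multiplying over $j=0,\dots,n-1$ gives the first product. Factoring $q^n$ out of each term turns it into
\[
  q^{n^2}\prod_{j=0}^{n-1}(1-q^{j-n}),
\]
and the reindexing $i=n-j$ gives the second form.

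\emph{The bound \eqref{eq:z-lower}.} By \eqref{eq:z-definition} and \eqref{eq:GL-count},
\[
  z_n(q)=q^{n^2}\Bigl(1-\prod_{i=1}^n(1-q^{-i})\Bigr).
\]
So it suffices to prove $\prod_{i=1}^n(1-q^{-i})<1-q^{-1}$. Every factor lies in $(0,1)$, and $n\ge2$ means the factor $1-q^{-2}<1$ is present. Hence the product is strictly less than its first factor $1-q^{-1}$, which gives $z>q^{n^2}\cdot q^{-1}$.

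A more direct combinatorial route also works. Every matrix whose first row is zero is singular, and there are $q^{n^2-n}$ of these. The matrices whose first two rows are equal and nonzero add further singular matrices. This route is clumsier, so I will use the product argument.

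\emph{The asymptotic \eqref{eq:z-asymptotic}.} I will expand the product. The estimate
\[
  \prod_{i=1}^n(1-q^{-i})=1-q^{-1}+O_n(q^{-2})
\]
holds because every product of two or more factors $q^{-i}$, and every single factor $q^{-i}$ with $i\ge2$, is at most $q^{-2}$ in absolute value. There are at most $2^n$ such terms, which is a constant depending only on $n$. Therefore
\[
  z=q^{n^2}\bigl(q^{-1}+O_n(q^{-2})\bigr)=q^{n^2-1}\bigl(1+O_n(q^{-1})\bigr).
\]

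No step is a real obstacle. The only care needed is in \eqref{eq:z-lower}, where the inequality must be strict; this is exactly where the hypothesis $n\ge2$ is used.
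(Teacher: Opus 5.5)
Your proof is correct and follows the paper's argument essentially verbatim: you count rows where the paper counts columns, then use the same product inequality $\prod_{i=1}^n(1-q^{-i})<1-q^{-1}$ (with $n\ge2$ giving strictness) and the same expansion for the asymptotic. One caution: your aside that the combinatorial route ``also works'' is false as sketched, since zero first row plus equal nonzero first two rows gives only $2q^{n^2-n}-q^{n^2-2n}<q^{n^2-1}$ singular matrices, but you rightly do not rely on it.
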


\begin{proof}
An invertible matrix is obtained by choosing its columns successively.  The first column can be any nonzero vector, giving $q^n-1$ choices.  Once $j$ linearly independent columns have been chosen, their span has $q^j$ vectors, so the next column has $q^n-q^j$ choices.  This proves the first product in \eqref{eq:GL-count}.  Factoring $q^n$ from every factor gives the second product. By \eqref{eq:z-definition} and \eqref{eq:GL-count}, we have
\[
 z_n(q)=q^{n^2}
 \left(1-\prod_{i=1}^{n}(1-q^{-i})\right).
\]
Because $n\ge2$, all factors are positive and
\[
  \prod_{i=1}^{n}(1-q^{-i})<1-q^{-1}.
\]
Therefore $z_n(q)>q^{n^2}q^{-1}=q^{n^2-1}$. Finally, for fixed $n$, expanding the finite product gives
\[
  \prod_{i=1}^{n}(1-q^{-i})=1-q^{-1}+O_n(q^{-2}).
\]
Substituting into the formula for $z_n(q)$ proves \eqref{eq:z-asymptotic}.
\end{proof}


We can now state the parity-sensitive bound in a compact form.

\begin{theorem}\label{thm:main}
For every prime power $q$ and every integer $n\ge2$,
\[
  q^n-1\leq\diss(\Gamma_n(q))\leq\diss(T_n(q))\leq U_n(q),
\]
where
\begin{equation}\label{eq:U-definition}
U_n(q)=
\begin{cases}
\displaystyle
\left\lfloor q^{n^2}
\frac{a_2(n,q)+2}{z_n(q)+a_2(n,q)}\right\rfloor,
&q\text{ even},\\[1.2em]
\displaystyle
\left\lfloor q^{n^2}
\frac{a_1(n,q)+2}{z_n(q)+a_1(n,q)}\right\rfloor,
&q\text{ odd}.
\end{cases}
\end{equation}
In addition,
\[
  \diss(T_n(q))\ge q^n.
\]
\end{theorem}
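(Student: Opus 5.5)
The plan is to prove the chain of inequalities one link at a time. The lower bounds come from an explicit construction. The middle inequality is immediate. The upper bound comes from a Laplacian inequality for $k$-independent sets, applied with $k=1$ to $T_n(q)$, combined with a character computation of $\lambda_{\max}(T_n(q))$.

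\emph{Lower bounds.} Fix a basis of $\F_{q^n}$ over $\F_q$. The regular representation embeds $\F_{q^n}$ as a subfield $K\subseteq\Mat_n(q)$, and every nonzero element of $K$ is invertible. For distinct $A,B\in K$, the sum $A+B$ lies in $K$, so it is singular iff $A+B=0$, that is, iff $B=-A$.
\begin{itemize}
\item Inside $T_n(q)$, each $A\in K$ therefore has at most one neighbour in $K$, namely $-A$. So $K$ is a dissociation set of size $q^n$, which gives $\diss(T_n(q))\ge q^n$.
\item Restricting to $K\setminus\{0\}\subseteq\GL_n(q)$ gives a dissociation set of $\Gamma_n(q)$ of size $q^n-1$. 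When $q$ is even it is even an independent set, since $-A=A$ is excluded by distinctness.
\end{itemize}
The inequality $\diss(\Gamma_n(q))\le\diss(T_n(q))$ follows from \eqref{eq:induced-subgraph}, because a dissociation set of an induced subgraph is one of the whole graph.

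\emph{Laplacian inequality.} Next I will prove $\alpha_k(G)\le |V|\bigl(1-(\delta-k)/\lambda_{\max}\bigr)$. Let $D$ be $k$-independent with $|D|=d$, let $C=V\setminus D$, and take $x=\ind_D-\frac dn\ind$. Then $x^{\mathsf T}L_Gx=e_G(D,C)$ and $x^{\mathsf T}x=d(n-d)/n$. Since $e_G(D)\le kd/2$,
\[
e_G(D,C)=\sum_{u\in D}d_G(u)-2e_G(D)\ge d(\delta-k).
\]
Comparing with $x^{\mathsf T}L_Gx\le\lambda_{\max}x^{\mathsf T}x$ and rearranging gives the inequality.

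\emph{Spectrum of $T_n(q)$.} Let $N$ be the ``sum adjacency'' matrix with $N_{X,Y}=[X+Y\in\cS_n(q)]$, loops included. The degree of $X$ in $T_n(q)$ is $z-[2X\in\cS_n(q)]$. The loop term cancels exactly between $D_{T}$ and $A_{T}$, so
\[
L_{T_n(q)}=z_n(q)I-N.
\]
Let $\psi$ be a nontrivial additive character of $\F_q$, and set $\chi_M(X)=\psi(\Tr(MX))$. Then $N\chi_M=\widehat{\cS}(M)\,\overline{\chi_M}$, where $\widehat{\cS}(M)=\sum_{X\in\cS}\chi_M(X)$.
\begin{itemize}
\item For odd $q$, the characters $\chi_M$ and $\chi_{-M}$ span a $2$-dimensional $N$-invariant space with eigenvalues $\pm|\widehat{\cS}(M)|$.
\item For even $q$, all characters are real, so the eigenvalues are the numbers $\widehat{\cS}(M)$ themselves.
\end{itemize}
Hence $\lambda_{\max}=z+\max_{M\neq0}|\widehat{\cS}(M)|$ for odd $q$, and $\lambda_{\max}=z-\min_{M\neq0}\widehat{\cS}(M)$ for even $q$. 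The eigenvalue $z$ at $M=0$ belongs to $\ind$, so it gives $L$-eigenvalue $0$. For $M\ne0$ we have $\widehat{\cS}(M)=-\sum_{X\in\GL_n(q)}\chi_M(X)$, which depends only on $r=\operatorname{rank}M$.

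\emph{Main obstacle.} The key step is to show that $\sum_{X\in\GL_n(q)}\chi_M(X)=(-1)^r a_r(n,q)$ for rank $r$. I will first reduce to $M=\mathrm{diag}(I_r,0)$ by two-sided $\GL$-invariance. I will then evaluate the sum by choosing the rows of $X$ successively, or equivalently by Möbius inversion over the subspace lattice: sum over matrices whose rows are independent, with each step contributing either a full character sum (which vanishes) or $q^{\cdot}-1$-type terms. I expect the exact bookkeeping here to be the hardest part.

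Granting this formula, the values are $\widehat{\cS}(M)=(-1)^{r+1}a_r$, and $a_r$ is decreasing in $r$. Hence:
\begin{itemize}
\item for odd $q$ the maximal $|\widehat{\cS}|$ is $a_1$, attained at rank $1$;
\item for even $q$ the most negative value is $-a_2$, attained at rank $2$ (rank $1$ gives the positive value $+a_1$).
\end{itemize}
So $\lambda_{\max}=z+a_1$ for odd $q$ and $\lambda_{\max}=z+a_2$ for even $q$.

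\emph{Conclusion.} Every singular matrix $X$ has $2X$ singular, so it has degree $z-1$, and every vertex has degree at least $z-1$; thus $\delta=z-1$. Plugging $k=1$, $\delta=z-1$ and $\lambda_{\max}=z+a_r$ into the Laplacian inequality gives
\[
\diss(T_n(q))\le q^{n^2}\frac{a_r+2}{z+a_r}.
\]
Taking the floor yields $U_n(q)$.
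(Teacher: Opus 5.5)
\textbf{Verdict: same architecture as the paper, with one genuine gap.} Your outline matches the paper's proof at every stage, and each of these steps is correct as written:
\begin{enumerate}
\item the regular representation of $\F_{q^n}$ for the lower bounds;
\item the induced-subgraph comparison;
\item the Laplacian bound for $k$-independent sets with $k=1$;
\item the identity $L_{T_n(q)}=z_n(q)I-N$, diagonalized by additive characters;
\item $\delta=z_n(q)-1$.
\end{enumerate}

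\textbf{The gap is the one you flag yourself.} The identity $\sum_{X\in\GL_n(q)}\chi_M(X)=(-1)^r a_r(n,q)$ is granted, not proved, and the whole upper bound rests on it.

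The sign matters, not just the size. For even $q$ you need $\widehat{\cS}(M)\ge -a_2$ for every $M\neq0$. That depends on rank-one characters giving $\widehat{\cS}(M)=+a_1$. A bound $|\widehat{\cS}(M)|\le a_1$ alone would only give the weaker bound with $a_1$ in place of $a_2$.

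The successive-rows method also does not go through as sketched. Choose the rows of $X$ in order, with $M=\operatorname{diag}(I_r,0)$. Row $i\le r$ contributes $-\sum_{w\in W_{i-1}}\psi(w_i)$, where $W_{i-1}$ is the span of the earlier rows. This equals $-|W_{i-1}|$ or $0$ according to whether $W_{i-1}\subseteq\{v:v_i=0\}$. So it depends on the earlier choices, and the sum does not factor into a product.

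\textbf{How the paper closes it.} It uses M\"obius inversion on the subspace lattice.
\begin{enumerate}
\item Write $\ind_{\{\ker X=0\}}=\sum_{W\le\ker X}(-1)^{\dim W}q^{\binom{\dim W}{2}}$ and interchange the sums.
\item The inner sum runs over the additive group $\{X:W\subseteq\ker X\}$, which has order $q^{n(n-\dim W)}$.
\item For your normalized $M$, $\chi_M$ is trivial on that group exactly when $W\supseteq\langle e_1,\dots,e_r\rangle$.
\item Count such $W$ by Gaussian binomial coefficients and apply the finite $q$-binomial theorem at $x=q^{-(n-r)}$. This gives $(-1)^rq^{n(n-1)/2}\prod_{i=1}^{n-r}(q^i-1)$.
\end{enumerate}

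\textbf{Alternatively, your row idea works as a recursion.} Single out row $r$ and choose it last. Let $f_n(r)=\sum_{X\in\GL_n(q)}\psi(x_{11}+\cdots+x_{rr})$.
\begin{enumerate}
\item The other $n-1$ rows must span a hyperplane $W$, and row $r$ ranges over $\F_q^n\setminus W$.
\item Row $r$ contributes $-\sum_{w\in W}\psi(w_r)$. This is $-q^{n-1}$ if $W=\{v:v_r=0\}$ and $0$ otherwise.
\item In the surviving case, delete coordinate $r$. The remaining rows become an arbitrary element of $\GL_{n-1}(q)$ carrying the character of $\operatorname{diag}(I_{r-1},0)$.
\item Hence $f_n(r)=-q^{n-1}f_{n-1}(r-1)$.
\item Iterate down to $f_{n-r}(0)=|\GL_{n-r}(q)|$, with $|\GL_0(q)|=1$. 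Since $\sum_{j=1}^{r}(n-j)+\binom{n-r}{2}=\binom n2$, this is exactly $(-1)^ra_r(n,q)$.
\end{enumerate}

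Either argument, inserted at your ``main obstacle,'' completes the proof.
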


\begin{corollary}\label{cor:uniform}
For every prime power $q$ and every $n\ge2$,
\[
  \diss(\Gamma_n(q))\le\diss(T_n(q))
  \le q^{n^2-n+1}-1.
\]
\end{corollary}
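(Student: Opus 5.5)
The plan is to deduce the corollary directly from Theorem~\ref{thm:main}, using only the elementary counts in Lemma~\ref{lem:matrix-counts}. The first two inequalities are already contained in Theorem~\ref{thm:main}, so it suffices to show $U_n(q)\le q^{n^2-n+1}-1$. Since $U_n(q)$ is the floor of a real number, it is enough to prove the strict inequality
\[
  q^{n^2}\frac{a_r+2}{z+a_r}<q^{n^2-n+1}
\]
for the relevant $r\in\{1,2\}$. Once this holds, the floor is at most $q^{n^2-n+1}-1$.

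\textbf{Reduction to $r=1$.} The function $a\mapsto (a+2)/(z+a)$ has derivative proportional to $z-2$. This is positive because $z>q^{n^2-1}\ge 2$ by \eqref{eq:z-lower}, so the function is increasing in $a$. From \eqref{eq:a-definition}, $a_2=a_1/(q^{n-1}-1)\le a_1$. Hence the even case is dominated by the expression with $a_1$, and it suffices to treat $r=1$ for both parities. Clearing denominators and dividing by $q^{n^2-n+1}$, the target becomes
\[
  (q^{n-1}-1)\,a_1+2q^{n-1}<z .
\]

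\textbf{Estimating $a_1$.} The factor $i=1$ of $\prod_{i=1}^{n-1}(q^i-1)$ is $q-1$, and every other factor $q^i-1$ is at most $q^i$. This gives $a_1\le q^{n^2-n}(1-q^{-1})$. Consequently
\[
  (q^{n-1}-1)a_1=q^{n-1}a_1-a_1\le q^{n^2-1}-q^{n^2-2}-a_1 .
\]
Using $z>q^{n^2-1}$, it then suffices to check that
\[
  2q^{n-1}\le q^{n^2-2}+a_1 .
\]
This holds for $n\ge2$ because $n^2-2\ge n-1$ and $a_1\ge q^{n(n-1)/2}\ge q^{n-1}$. The lower bound on $a_1$ uses that every factor satisfies $q^i-1\ge1$.

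\textbf{Expected difficulty.} The main point needing care is ensuring strictness, so that the floor actually drops to $q^{n^2-n+1}-1$ rather than only to $q^{n^2-n+1}$. Strictness comes from the strict inequality \eqref{eq:z-lower}. The remaining comparisons are routine exponent checks.
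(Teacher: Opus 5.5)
Your proof is correct and follows essentially the paper's route. Both arguments reduce to the rank-one quantity $a_1$ by monotonicity (the paper does this through $\lambda_{\max}(T_n(q))<z_n(q)+q^{n^2-n}$), then use the strict bound $z_n(q)>q^{n^2-1}$ to land strictly below $q^{n^2-n+1}$. The only difference is bookkeeping: the paper replaces $a_1$ outright by $Q=q^{n^2-n}$ and needs only $2q^{n-1}\le Q$, whereas you keep $a_1$ and so need the finer estimate $a_1\le q^{n^2-n}(1-q^{-1})$ together with $a_1\ge q^{n-1}$.
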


\begin{corollary}\label{cor:asymptotic}
For fixed $n\ge2$ and $q\to\infty$ through prime powers,
\[
 U_n(q)=
 \begin{cases}
 (1+O_n(q^{-1}))q^{n^2-2n+2},&q\text{ even},\\
 (1+O_n(q^{-1}))q^{n^2-n+1},&q\text{ odd}.
 \end{cases}
\]
\end{corollary}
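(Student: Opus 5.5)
The plan is to substitute the asymptotics of $a_r(n,q)$ and $z_n(q)$ into \eqref{eq:U-definition}, treating $n$ as fixed throughout, and then absorb the floor into the error term. All error terms below are $O_n(q^{-1})$ as $q\to\infty$.

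First I will establish the size of $a_1$ and $a_2$. Writing $q^i-1=q^i(1-q^{-i})$ in \eqref{eq:a-definition} gives
\[
  a_r(n,q)=q^{n(n-1)/2+(n-r)(n-r+1)/2}\prod_{i=1}^{n-r}(1-q^{-i}).
\]
The finite product equals $1+O_n(q^{-1})$ (and it is exactly $1$ when it is empty). Therefore
\[
  a_1=q^{n^2-n}\bigl(1+O_n(q^{-1})\bigr),
  \qquad
  a_2=q^{(n-1)^2}\bigl(1+O_n(q^{-1})\bigr).
\]
Both exponents are at least $1$ for $n\ge2$; the smallest case is $a_2(2,q)=q$. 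Hence
\[
  a_r+2=a_r\bigl(1+O(q^{-1})\bigr).
\]

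Next I will treat the denominator. By \eqref{eq:z-asymptotic}, $z_n(q)=q^{n^2-1}(1+O_n(q^{-1}))$. Since $n\ge2$, we have $n^2-n\le n^2-2$ and $(n-1)^2\le n^2-2$, so $a_r/z_n(q)=O_n(q^{-1})$ for $r=1,2$. It follows that
\[
  z_n(q)+a_r=q^{n^2-1}\bigl(1+O_n(q^{-1})\bigr).
\]
Combining the numerator and denominator estimates, the quantity inside the floor is
\[
  q^{n^2}\frac{a_r+2}{z_n(q)+a_r}
  =q\,a_r\bigl(1+O_n(q^{-1})\bigr).
\]
For odd $q$ ($r=1$) this is $q^{n^2-n+1}(1+O_n(q^{-1}))$, and for even $q$ ($r=2$) it is $q^{n^2-2n+2}(1+O_n(q^{-1}))$.

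Finally, the floor changes the value by less than $1$. Both main terms are at least $q^2$, so this change is a relative error of $O(q^{-2})$, which is absorbed into $O_n(q^{-1})$. The only step needing care, rather than any real obstacle, is the boundary case $n=2$. There one must check that $a_r$ still dominates the additive constant $2$, and that $a_1=q^{2}(1-q^{-1})$ is genuinely of lower order than $z_2(q)\asymp q^{3}$; both checks come down to the exponent comparisons made above.
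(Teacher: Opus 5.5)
Your proposal is correct and takes essentially the same route as the paper. Both arguments factor the leading powers of $q$ out of $a_1$ and $a_2$, use $z_n(q)=q^{n^2-1}(1+O_n(q^{-1}))$ together with $a_r=o(z_n(q))$ and $2=o(a_r)$, and then observe that the floor is negligible; your explicit exponent checks for $n=2$ and the $O(q^{-2})$ relative bound for the floor just make the paper's brief remarks more precise.
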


Theorem~\ref{thm:main} will be proved in two parts.  The lower bounds are
established in Section~\ref{sec:lower}.  The upper bounds require the
spectral preparation developed in Section~\ref{sec:spectral}.

\section{The extension-field lower bound}\label{sec:lower}

The regular representation of $\F_{q^n}$ produces a large set of matrices
whose pairwise sums have a particularly simple singularity condition.

\begin{proposition}\label{prop:lower}
For every prime power $q$ and every $n\ge2$,
\[
  \diss(\Gamma_n(q))\ge q^n-1
  \qquad\text{and}\qquad
  \diss(T_n(q))\ge q^n.
\]
More precisely, the constructed subgraph is independent when $q$ is even;
when $q$ is odd, it is a perfect matching together with one isolated vertex
in the total-graph case.
\end{proposition}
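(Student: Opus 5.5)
We use the regular representation of the extension field. Fix a basis of $\F_{q^n}$ over $\F_q$. For each $\beta\in\F_{q^n}$, let $M_\beta\in\Mat_n(q)$ be the matrix of the $\F_q$-linear map $x\mapsto\beta x$. The map $\beta\mapsto M_\beta$ is an injective ring homomorphism $\F_{q^n}\hookrightarrow\Mat_n(q)$. Put
\[
  K=\{M_\beta:\beta\in\F_{q^n}\},\qquad K^\times=K\setminus\{0\}.
\]
Multiplication by a nonzero field element is a bijection, so $M_\beta$ is invertible for every $\beta\ne0$; hence $K^\times\subseteq\GL_n(q)$. Additivity gives $M_\beta+M_\gamma=M_{\beta+\gamma}$. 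This sum is singular exactly when $\beta+\gamma=0$, that is, when $\gamma=-\beta$. So in $T_n(q)$ two distinct vertices $M_\beta,M_\gamma$ of $K$ are adjacent if and only if $\gamma=-\beta$. The same holds in $\Gamma_n(q)$ for vertices of $K^\times$, by \eqref{eq:induced-subgraph}.

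We then read off the structure of the induced subgraph in each characteristic.

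\emph{Even $q$.} Here $-\beta=\beta$, so the condition $\gamma=-\beta$ forces $\gamma=\beta$, which is excluded for distinct vertices. Thus $K$ and $K^\times$ are independent sets.

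\emph{Odd $q$.} The involution $\beta\mapsto-\beta$ on $\F_{q^n}$ has $0$ as its only fixed point. Each nonzero $\beta$ is therefore paired with exactly one partner $-\beta\ne\beta$. So $T_n(q)[K]$ is a perfect matching on the $q^n-1$ nonzero elements together with the isolated vertex $M_0=0$. Likewise $\Gamma_n(q)[K^\times]$ is a perfect matching.

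In both cases the induced graphs have maximum degree at most one, so $K$ and $K^\times$ are dissociation sets. Their sizes, $q^n$ and $q^n-1$, give the two lower bounds.

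We expect no real obstacle. The only points needing care are the injectivity and additivity of the regular representation, and the fixed-point analysis of negation, which is where the parity split comes from.
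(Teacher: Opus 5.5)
Your proposal is correct and follows essentially the same route as the paper. Both arguments embed $\F_{q^n}$ into $\Mat_n(q)$ via its regular representation, use additivity to reduce singularity of $M_\beta+M_\gamma$ to $\beta+\gamma=0$, split by parity through the fixed points of negation, and adjoin the zero matrix as an isolated vertex for $T_n(q)$.
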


\begin{proof}
Let $K=\F_{q^n}$.  Since $K$ is an extension of $\F_q$ of degree $n$, it is
an $n$-dimensional vector space over $\F_q$.  Choose and fix an ordered
$\F_q$-basis $\mathcal B$ of $K$.

For each $x\in K$, define the $\F_q$-linear map
\[
  m_x:K\longrightarrow K,
  \qquad
  m_x(y)=xy.
\]
Let $\iota(x)$ be the matrix of $m_x$ with respect to $\mathcal B$.  The map
\[
  \iota:K\longrightarrow\Mat_n(q),
  \qquad x\longmapsto[m_x]_{\mathcal B}
\]
is an $\F_q$-algebra homomorphism.  Indeed, for all $x,y\in K$,
\[
  m_{x+y}=m_x+m_y,
  \qquad
  m_{xy}=m_x\circ m_y,
  \qquad
  m_1=\operatorname{id}_K.
\]
It is injective: if $\iota(x)=0$, then $m_x(1)=x=0$.

We next determine which matrices $\iota(x)$ are invertible.  If $x\ne0$,
then $m_x$ has inverse $m_{x^{-1}}$, so $\iota(x)\in\GL_n(q)$.  Conversely,
$m_0$ is the zero map and is not invertible.  Thus
\begin{equation}\label{eq:field-units}
  \iota(K^\times)\subseteq\GL_n(q),
  \qquad
  |\iota(K^\times)|=q^n-1.
\end{equation}

Let $x,y\in K^\times$ be distinct elements.  Additivity of $\iota$ gives
\[
  \iota(x)+\iota(y)=\iota(x+y).
\]
By the preceding invertibility criterion, this sum is singular if and only if $x+y=0$.  Therefore the induced graph on $\iota(K^\times)$ has the following form.

If $q$ is even, then $-x=x$.  The relation $x+y=0$ would imply $y=x$, which is excluded because the graph is simple and $x,y$ are distinct.  Hence $\iota(K^\times)$ is independent.

If $q$ is odd, then $x\ne-x$ for every $x\ne0$.  Each vertex $\iota(x)$ has exactly one neighbor inside the constructed set, namely $\iota(-x)$.  Thus the induced graph is a perfect matching.  In both parity cases it has maximum degree at most one, so \eqref{eq:field-units} gives
\[
  \diss(\Gamma_n(q))\ge q^n-1.
\]

For the total graph, include $\iota(0)$.  For every nonzero $x$, the sum
$\iota(0)+\iota(x)=\iota(x)$ is invertible, so $\iota(0)$ is isolated in the
subgraph induced by $\iota(K)$.  Consequently $\iota(K)$ is a dissociation
set in $T_n(q)$ of order $q^n$.
\end{proof}

\section{Spectral method and upper bounds}\label{sec:spectral}

\subsection{A Laplacian bound for \texorpdfstring{$k$}{k}-independent sets}
\label{sec:laplacian-bound}

Hoffman's ratio bound and its Laplacian versions are standard tools for
independent sets; see \cite{van-dam-haemers,godsil-newman,haemers}.  The
following elementary extension is the only general spectral inequality
needed here.

\begin{theorem}\label{thm:k-laplacian}
Let $G$ be a graph of order $v$, minimum degree $\delta$, and largest
Laplacian eigenvalue $\lambda_{\max}>0$.  For every integer $k\ge0$,
\begin{equation}\label{eq:k-laplacian-bound}
  \alpha_k(G)\leq
  v\left(1-\frac{\delta-k}{\lambda_{\max}}\right).
\end{equation}
In particular,
\begin{equation}\label{eq:diss-laplacian-bound}
  \diss(G)\leq
  v\left(1-\frac{\delta-1}{\lambda_{\max}}\right).
\end{equation}
\end{theorem}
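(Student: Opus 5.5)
We will use the standard Laplacian cut argument with the vector $x=\ind_D-\frac{d}{v}\ind$. Let $D$ be a $k$-independent set of order $d=|D|$ and let $C=V(G)\setminus D$. If $d=v$, then every vertex of $G$ has degree at most $k$, so $\delta\le k$, and the right-hand side of \eqref{eq:k-laplacian-bound} is at least $v$; the bound holds trivially. We may therefore assume $0<d<v$. More generally, whenever $\delta\le k$ the right-hand side is at least $v\ge d$, so only the case $\delta>k$ carries content, although the argument below does not need this split.

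\textbf{Upper bound on the quadratic form.} Since $L_G\ind=0$ by \eqref{eq:laplacian-basic}, the vector $x$ satisfies $x^{\mathsf T}L_Gx=\ind_D^{\mathsf T}L_G\ind_D=e_G(D,C)$. This follows from the edge-sum formula in \eqref{eq:laplacian-basic}: the differences $x_u-x_w$ are unchanged by subtracting a constant, and $(\ind_D)_u-(\ind_D)_w$ equals $\pm1$ exactly on the cut edges and $0$ elsewhere. The Rayleigh quotient gives $x^{\mathsf T}L_Gx\le\lambda_{\max}\|x\|^2$. A direct computation gives
\[
  \|x\|^2=d\Bigl(1-\frac dv\Bigr)^2+(v-d)\frac{d^2}{v^2}=\frac{d(v-d)}{v}.
\]
Hence $e_G(D,C)\le\lambda_{\max}\,d(v-d)/v$.

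\textbf{Lower bound on the cut.} Summing degrees over $D$ gives $\sum_{u\in D}d_G(u)=2e_G(D)+e_G(D,C)$. Since $\Delta(G[D])\le k$, we have $2e_G(D)=\sum_{u\in D}d_{G[D]}(u)\le kd$. Therefore
\[
  e_G(D,C)\ge \delta d-kd=(\delta-k)d.
\]

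\textbf{Combining.} Putting the two bounds together gives $(\delta-k)d\le\lambda_{\max}d(v-d)/v$. Dividing by $d>0$ and by $\lambda_{\max}>0$ yields $d\le v\bigl(1-(\delta-k)/\lambda_{\max}\bigr)$. Taking $D$ of maximum size proves \eqref{eq:k-laplacian-bound}, and setting $k=1$ gives \eqref{eq:diss-laplacian-bound}.

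No step is a real obstacle. The only points needing care are the bookkeeping in $2e_G(D)\le kd$ (which uses the degree bound inside $G[D]$, not an edge count) and the degenerate case $D=V(G)$, where $\|x\|=0$; that case is handled by the preliminary remark above.
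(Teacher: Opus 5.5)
Your proof is correct and follows essentially the same route as the paper. You use the centered indicator vector, the Rayleigh bound $e_G(D,C)\le\lambda_{\max}d(v-d)/v$, and the degree-sum lower bound $e_G(D,C)\ge(\delta-k)d$. Your separate treatment of $D=V(G)$ is harmless but unnecessary, since $y^{\mathsf T}L_Gy\le\lambda_{\max}y^{\mathsf T}y$ holds trivially for $y=0$.
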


\begin{proof}
Let $D\subseteq V(G)$ be a $k$-independent set and write $d=|D|$.  If
$d=0$, the assertion is immediate, so assume $d>0$.  Let $x\in\mathbb R^v$
be the indicator vector of $D$:
\[
  x_u=
  \begin{cases}
  1,&u\in D,\\
  0,&u\notin D.
  \end{cases}
\]
By the quadratic-form identity \eqref{eq:laplacian-basic}, an edge with both
endpoints in $D$ contributes $(1-1)^2=0$, an edge with both endpoints
outside $D$ contributes $(0-0)^2=0$, and an edge crossing from $D$ to its
complement contributes $1$.  Therefore
\begin{equation}\label{eq:indicator-cut}
  x^{\mathsf T}L_Gx=e_G(D,V(G)\setminus D).
\end{equation}

The degree sum over $D$ counts every edge of $G[D]$ twice and every crossing
edge once.  Hence
\begin{equation}\label{eq:degree-cut}
  e_G(D,V(G)\setminus D)
  =\sum_{u\in D}d_G(u)-2e_G(D).
\end{equation}
Since every vertex has degree at least $\delta$,
\[
  \sum_{u\in D}d_G(u)\ge\delta d.
\]
Since $\Delta(G[D])\le k$, the handshaking lemma applied to $G[D]$ gives
\[
  2e_G(D)=\sum_{u\in D}d_{G[D]}(u)\le kd.
\]
Combining these inequalities with \eqref{eq:indicator-cut} and
\eqref{eq:degree-cut}, we obtain the lower estimate
\begin{equation}\label{eq:cut-lower}
  x^{\mathsf T}L_Gx\ge(\delta-k)d.
\end{equation}

The vector $x$ need not be orthogonal to the kernel vector $\ind$, so we
center it.  Define
\[
  y=x-\frac{d}{v}\ind.
\]
Because $\ind^{\mathsf T}x=d$ and
$\ind^{\mathsf T}\ind=v$,
\[
  \ind^{\mathsf T}y=d-\frac{d}{v}v=0.
\]
Also, $L_G\ind=0$, and therefore
\begin{equation}\label{eq:centered-energy}
  y^{\mathsf T}L_Gy=x^{\mathsf T}L_Gx.
\end{equation}
A direct calculation gives
\begin{align}
 y^{\mathsf T}y
 &=\left(x-\frac{d}{v}\ind\right)^{\mathsf T}
   \left(x-\frac{d}{v}\ind\right)\notag\\
 &=x^{\mathsf T}x-\frac{2d}{v}\ind^{\mathsf T}x
   +\frac{d^2}{v^2}\ind^{\mathsf T}\ind\notag\\
 &=d-\frac{2d^2}{v}+\frac{d^2}{v}
 =d-\frac{d^2}{v}.                                         \label{eq:centered-norm}
\end{align}

The Rayleigh principle for the largest eigenvalue gives us
\[
  y^{\mathsf T}L_Gy\le\lambda_{\max}y^{\mathsf T}y.
\]
Using \eqref{eq:cut-lower}, \eqref{eq:centered-energy}, and
\eqref{eq:centered-norm}, we obtain
\[
  (\delta-k)d
  \le\lambda_{\max}\left(d-\frac{d^2}{v}\right).
\]
Dividing by $d>0$ and rearranging we get
\[
  \frac{\lambda_{\max}d}{v}
  \le\lambda_{\max}-\delta+k,
\]
and hence
\[
  d\le v\left(1-\frac{\delta-k}{\lambda_{\max}}\right).
\]
Finally, taking the maximum over all $k$-independent sets proves \eqref{eq:k-laplacian-bound} and the special case $k=1$ is \eqref{eq:diss-laplacian-bound}.
\end{proof}

\begin{remark}
For $k=0$, Theorem~\ref{thm:k-laplacian} becomes the standard Laplacian
upper bound
\[
  \alpha(G)\le v\left(1-\frac{\delta}{\lambda_{\max}}\right).
\]
For $k=1$, the only numerical change is the replacement of $\delta$ by
$\delta-1$, but this small change must be combined with a sufficiently
accurate value of $\lambda_{\max}$ to obtain the parity-sensitive results
of this paper.
\end{remark}

\subsection{Sum graphs and their Laplacian spectrum}\label{sec:sum-graphs}

Let $H$ be a finite abelian group written additively, and let $S\subseteq H$.
The \emph{sum graph} $\operatorname{Sum}(H,S)$ is the simple graph with
vertex set $H$ in which distinct $g,h$ are adjacent if and only if
$g+h\in S$.  This construction is closely related to an abelian Cayley
graph, but it uses a sum rather than a difference.  Adjacency spectra of
loopy versions of Cayley sum graphs have appeared in work such as
\cite{li-sum,devos}; the loopless Laplacian form used below is particularly
convenient for $T_n(q)$ and was emphasized by Nica \cite{nica}.

The graph need not be regular.  Indeed, for fixed $g\in H$, the map
$s\mapsto s-g$ sends $S$ bijectively to the set of all potential neighbors
of $g$.  The potential neighbor $s-g$ equals $g$ precisely when $s=2g$.
Therefore
\begin{equation}\label{eq:sum-degree}
  d(g)=|S|-\varepsilon_g,
  \qquad
  \varepsilon_g=
  \begin{cases}
  1,&2g\in S,\\
  0,&2g\notin S.
  \end{cases}
\end{equation}

Let $\widehat H$ be the character group of $H$.  Each character is a
homomorphism $\chi:H\to\mathbb C^\times$, and its values lie on the unit
circle.  In particular,
\[
  \chi(-g)=\chi(g)^{-1}=\overline{\chi(g)}.
\]
The characters form an orthogonal basis of the vector space of complex
functions on $H$.

\begin{theorem}
\label{thm:sum-spectrum}
Let $s=|S|$ and, for $\chi\in\widehat H$, define
\[
  \sigma_\chi=\sum_{a\in S}\chi(a).
\]
Every real character $\chi$ contributes the Laplacian eigenvalue
$s-\sigma_\chi$.  Every conjugate pair of nonreal characters
$\{\chi,\overline\chi\}$ contributes the two eigenvalues
\[
  s-|\sigma_\chi|
  \qquad\text{and}\qquad
  s+|\sigma_\chi|.
\]
Together these values, with multiplicity, form the full Laplacian spectrum
of $\operatorname{Sum}(H,S)$.
\end{theorem}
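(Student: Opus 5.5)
We compute the Laplacian $L=D-A$ of $\operatorname{Sum}(H,S)$ directly on characters, and then diagonalize on the two-dimensional spaces spanned by conjugate pairs. Write $A$ for the adjacency operator on functions $f:H\to\mathbb C$. The loopy sum-adjacency is $(Bf)(g)=\sum_{a\in S}f(a-g)$. The only difference between $B$ and $A$ is the loop at $g$ when $2g\in S$, so $A=B-E$, where $E=\operatorname{diag}(\varepsilon_g)$. By \eqref{eq:sum-degree}, $D=sI-E$. Hence
\[
  L=D-A=(sI-E)-(B-E)=sI-B,
\]
and the loop corrections cancel exactly. This cancellation is the key observation, and it is why the Laplacian (not the adjacency matrix) is tractable even though the graph is not regular.

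Next we evaluate $B$ on a character $\chi$:
\[
  (B\chi)(g)=\sum_{a\in S}\chi(a)\chi(-g)=\sigma_\chi\,\overline{\chi}(g).
\]
So $B\chi=\sigma_\chi\overline\chi$, and likewise $B\overline\chi=\overline{\sigma_\chi}\chi$, since $\sigma_{\overline\chi}=\overline{\sigma_\chi}$. If $\chi$ is real, then $\chi=\overline\chi$ and $\sigma_\chi\in\mathbb R$, so $\chi$ is an eigenvector of $L$ with eigenvalue $s-\sigma_\chi$. If $\chi\ne\overline\chi$, the span $W=\langle\chi,\overline\chi\rangle$ is two-dimensional (distinct characters are orthogonal) and $B$-invariant. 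In the basis $(\chi,\overline\chi)$, $B$ acts by the matrix $\begin{pmatrix}0&\overline{\sigma_\chi}\\ \sigma_\chi&0\end{pmatrix}$, whose eigenvalues are $\pm|\sigma_\chi|$. Explicitly, write $\sigma_\chi=|\sigma_\chi|e^{i\theta}$. Then $e^{-i\theta/2}\chi\pm e^{i\theta/2}\overline\chi$ are eigenvectors of $B$ for $\pm|\sigma_\chi|$ (when $\sigma_\chi=0$ any basis works, with eigenvalue $0$ twice). This gives the Laplacian eigenvalues $s\mp|\sigma_\chi|$ on $W$.

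Finally, we assemble the full spectrum. The characters partition into real characters and conjugate pairs, and the corresponding subspaces (lines and planes) are mutually orthogonal and together span $\mathbb C^H$, since $|\widehat H|=|H|$ and the characters form an orthogonal basis. Collecting one eigenvalue per real character and two per pair therefore accounts for exactly $|H|$ eigenvalues with multiplicity. The resulting list is the complete spectrum of the real symmetric matrix $L$, because the complex and real spectra coincide.

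The only delicate point is the first step: verifying that the diagonal loop correction appears identically in $D$ and in $A$, using the degree formula \eqref{eq:sum-degree}. Once that is checked, the rest is linear algebra on invariant subspaces.
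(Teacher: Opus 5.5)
Your proposal is correct and follows essentially the same route as the paper: your operator identity $L=sI-B$ is the same cancellation of the loop term $\varepsilon_g$ that the paper carries out pointwise to obtain $L\chi=s\chi-\sigma_\chi\overline\chi$. After that, both arguments diagonalize on $\operatorname{span}\{\chi,\overline\chi\}$ and assemble the full spectrum from the orthogonal character basis; your explicit eigenvectors $e^{-i\theta/2}\chi\pm e^{i\theta/2}\overline\chi$ are a small addition beyond the paper's characteristic-polynomial computation.
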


\begin{proof}
Let $L$ be the Laplacian operator of the sum graph and fix
$\chi\in\widehat H$.  By \eqref{eq:sum-degree}, the actual neighbor sum at
$g$ is
\[
  \sum_{h\sim g}\chi(h)
  =\sum_{a\in S}\chi(a-g)-\varepsilon_g\chi(g).
\]
The subtracted term removes the forbidden loop $h=g$ when $2g\in S$.  It
follows that
\begin{align*}
 (L\chi)(g)
 &=d(g)\chi(g)-\sum_{h\sim g}\chi(h)\\
 &=(s-\varepsilon_g)\chi(g)
   -\left(\sum_{a\in S}\chi(a-g)-\varepsilon_g\chi(g)\right)\\
 &=s\chi(g)-\sum_{a\in S}\chi(a-g).
\end{align*}
Using multiplicativity of the character,
\[
  \chi(a-g)=\chi(a)\chi(-g)
  =\chi(a)\overline{\chi(g)}.
\]
Consequently,
\begin{equation}\label{eq:L-character}
  L\chi=s\chi-\sigma_\chi\overline\chi.
\end{equation}

If $\chi$ is real, then $\overline\chi=\chi$ and $L\chi=(s-\sigma_\chi)\chi$.  Thus $\chi$ is an eigenvector with the stated eigenvalue. Suppose next that $\chi$ is nonreal. Since
\[
  \sigma_{\overline\chi}
  =\sum_{a\in S}\overline{\chi(a)}
  =\overline{\sigma_\chi},
\]
equation \eqref{eq:L-character} shows that the two-dimensional space
$\operatorname{span}\{\chi,\overline\chi\}$ is $L$-invariant.  With respect
to the ordered basis $(\chi,\overline\chi)$, the restriction of $L$ is
\[
  \begin{pmatrix}
  s&-\overline{\sigma_\chi}\\
  -\sigma_\chi&s
  \end{pmatrix}.
\]
Its characteristic polynomial is
\[
  (s-\lambda)^2-|\sigma_\chi|^2,
\]
so its eigenvalues are $s\pm|\sigma_\chi|$.

Finally, the character basis partitions into real characters and conjugate pairs of nonreal characters.  The one- and two-dimensional invariant spaces described above therefore form a direct sum of the whole function space and no additional Laplacian eigenvalues remain.
\end{proof}

\subsection{Characters of the matrix ring and a Gauss sum}
\label{sec:character-sum}

Fix a nontrivial additive character
$\psi:(\F_q,+)\to\mathbb C^\times$.  For $U\in\Mat_n(q)$, define
\begin{equation}\label{eq:matrix-character}
  \chi_U(X)=\psi(\Tr(U^{\mathsf T}X)).
\end{equation}
The bilinear form
\[
  \langle U,X\rangle=\Tr(U^{\mathsf T}X)
  =\sum_{i,j=1}^{n}u_{ij}x_{ij}
\]
is nondegenerate.  Hence $U\mapsto\chi_U$ is injective.  Both
$\Mat_n(q)$ and its character group have $q^{n^2}$ elements, so these are
all additive characters of $\Mat_n(q)$.  Also,
\begin{equation}\label{eq:character-conjugate}
  \overline{\chi_U}=\chi_{-U}.
\end{equation}

We now prove the matrix character sum used in the spectral calculation.  It
was evaluated by Li and Hu \cite{li-hu}; the proof below is included to make
the present argument self-contained.

\begin{theorem}\label{thm:matrix-gauss}
If $U\in\Mat_n(q)$ has rank $r\ge1$, then
\begin{equation}\label{eq:matrix-gauss}
  \sum_{A\in\GL_n(q)}\chi_U(A)
  =(-1)^r q^{n(n-1)/2}
   \prod_{i=1}^{n-r}(q^i-1)
  =(-1)^r a_r(n,q).
\end{equation}
\end{theorem}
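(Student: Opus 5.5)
We want to show $\sum_{A\in\GL_n(q)}\psi(\Tr(U^{\mathsf T}A))=(-1)^r a_r(n,q)$ for $\operatorname{rank}U=r\ge1$. The plan is to reduce to a canonical $U$, then count invertible matrices by their first $r$ rows, and finally use the orthogonality of $\psi$.

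\emph{Step 1 (reduction).} For $P,Q\in\GL_n(q)$ we have
\[
\Tr((PUQ)^{\mathsf T}A)=\Tr(U^{\mathsf T}P^{\mathsf T}AQ^{\mathsf T}),
\]
and $A\mapsto P^{\mathsf T}AQ^{\mathsf T}$ permutes $\GL_n(q)$. So the sum depends only on the orbit of $U$ under $U\mapsto PUQ$, that is, only on $r$. We may therefore take $U=E_r=\operatorname{diag}(I_r,0)$. Then $\chi_U(A)=\psi(a_{11}+\dots+a_{rr})$.

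\emph{Step 2 (one row at a time).} Write $A$ by its rows $v_1,\dots,v_n$. The quantity $\chi_{E_r}(A)=\prod_{i\le r}\psi(v_i\cdot e_i)$ depends only on the first $r$ rows. The number of ways to complete linearly independent rows $v_1,\dots,v_r$ to an invertible matrix is $\prod_{j=r}^{n-1}(q^n-q^j)$, which is independent of the choice.

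So the sum equals this count times
\[
S_r=\sum \prod_{i=1}^r\psi(v_{ii}),
\]
where the sum runs over linearly independent tuples $(v_1,\dots,v_r)$. I would evaluate $S_r$ by induction on $r$, summing over $v_r$ last. With $W=\operatorname{span}(v_1,\dots,v_{r-1})$, the inner sum is
\[
\sum_{v\notin W}\psi(v_r)=\sum_{v\in\F_q^n}\psi(v_r)-\sum_{w\in W}\psi(w_r).
\]
The full sum over $\F_q^n$ vanishes because $\psi$ is nontrivial. The sum over $W$ equals $|W|=q^{r-1}$ if the coordinate functional $e_r^*$ vanishes on $W$, and $0$ otherwise.

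\emph{Step 3 (main obstacle).} The difficulty is that the inner sum is $-q^{r-1}\cdot[\,e_r^*|_W=0\,]$, so it couples to the earlier vectors. This breaks a naive product formula.

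The first thing I would try is to reorganize by the column structure. The condition that column $r$ of $v_1,\dots,v_{r-1}$ vanishes says these vectors lie in the hyperplane $x_r=0$. Plugging this in gives a recursion:
\[
S_r(n)=-q^{r-1}\cdot\#\{\text{such tuples, weighted by } \textstyle\prod_{i<r}\psi(v_{ii})\}.
\]
That weighted count is the same kind of sum $S_{r-1}$, now over $\F_q^{n-1}$ (delete coordinate $r$). Hence
\[
S_r(n)=-q^{r-1}S_{r-1}(n-1),\qquad S_0=1,
\]
which yields
\[
S_r(n)=(-1)^r q^{r(r-1)/2}.
\]

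\emph{Step 4 (assembly).} Multiply by $\prod_{j=r}^{n-1}(q^n-q^j)=q^{\sum_{j=r}^{n-1} j}\prod_{i=1}^{n-r}(q^i-1)$. The total power of $q$ is
\[
\frac{r(r-1)}{2}+\sum_{j=r}^{n-1}j=\frac{n(n-1)}{2}.
\]
This gives $(-1)^r q^{n(n-1)/2}\prod_{i=1}^{n-r}(q^i-1)=(-1)^r a_r(n,q)$, which is the claim.

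I would check the recursion against $r=1$ directly: there the sum is $\sum_{v\neq0}\psi(v_1)=-1$. I would also check the boundary case $r=n$, where the formula should give $(-1)^n q^{n(n-1)/2}$.
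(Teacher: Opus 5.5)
Your argument is correct, and it takes a genuinely different route from the paper.

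\textbf{The paper's route.} The paper never normalizes $U$. It writes the indicator of $\ker A=0$ as $\sum_{W\le\ker A}\mu(0,W)$, using the subspace-lattice M\"obius function $\mu(0,W)=(-1)^{\dim W}q^{\binom{\dim W}{2}}$. It then swaps the sums, so each term becomes a character sum over the additive subspace $\{A: W\subseteq\ker A\}$. Such a term survives only when $\Row(U)\subseteq W$. Finally it sums over all $W\supseteq\Row(U)$ using Gaussian binomial counts and the finite $q$-binomial theorem.

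\textbf{Your route.} You use the $\GL_n\times\GL_n$ invariance to reduce to $U=\operatorname{diag}(I_r,0)$. You then factor out the completion count $\prod_{j=r}^{n-1}(q^n-q^j)$. The remaining weighted count of independent $r$-tuples you evaluate by the recursion $S_r(n)=-q^{r-1}S_{r-1}(n-1)$. The coupling you flag in Step 3 is handled correctly: $e_r^*|_W=0$ holds exactly when all of $v_1,\dots,v_{r-1}$ lie in $\{x_r=0\}$. Deleting coordinate $r$ leaves coordinates $1,\dots,r-1$, and hence the weights, unchanged.

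\textbf{What each buys.} Your route is more elementary: it needs only orthogonality of $\psi$ on a subspace, with no M\"obius function and no $q$-binomial identity. It also makes the shape of the answer transparent. The factor $q^{\sum_{j=r}^{n-1}j}\prod_{i=1}^{n-r}(q^i-1)$ is literally the number of ways to finish the matrix, and $(-1)^rq^{\binom r2}$ is the contribution of the first $r$ rows. The paper's approach, in exchange, works directly with an arbitrary $U$ through its row space, with no normalization. Its M\"obius-inversion framework also adapts readily to sums under other kernel conditions, such as matrices with a prescribed kernel or rank.

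One cosmetic point: in Step 2 you use $v_r$ both for the $r$-th row and for the $r$-th coordinate of a vector $v$. Writing $e_r^*(v)$ for the latter would avoid the clash.
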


\begin{proof}
Let $V=\F_q^n$, and $A$ be a matrix with the linear map $V\to V$ given by left multiplication on column vectors.  For a subspace
$W\le V$, let
\[
  \mathcal L_W=\{A\in\Mat_n(q):W\subseteq\ker A\}.
\]
If $k=\dim W$, then every row of a matrix in $\mathcal L_W$ is a linear functional vanishing on $W$, so every row lies in the annihilator $W^\perp$, which has dimension $n-k$.  The $n$ rows can be chosen independently, and therefore
\begin{equation}\label{eq:L-W-size}
  |\mathcal L_W|=q^{n(n-k)}.
\end{equation}
The M\"obius function of the lattice of subspaces of $V$ satisfies
\begin{equation}\label{eq:subspace-mobius}
  \mu(0,W)=(-1)^kq^{\binom{k}{2}}
  \qquad (k=\dim W).
\end{equation}
The M\"obius inversion on this lattice gives, for every linear map $A$,
\begin{equation}\label{eq:invertible-indicator}
  \ind_{\{\ker A=0\}}
  =\sum_{W\le\ker A}\mu(0,W).
\end{equation}
Indeed, the right-hand side is $1$ when $\ker A=0$ and is $0$ otherwise, which is the defining cancellation property of the M\"obius function. Now, using \eqref{eq:invertible-indicator} to expand the desired sum, we have
\begin{align}
 \sum_{A\in\GL_n(q)}\chi_U(A)
 &=\sum_{A\in\Mat_n(q)}
   \chi_U(A)\ind_{\{\ker A=0\}}\notag\\
 &=\sum_{A\in\Mat_n(q)}\chi_U(A)
   \sum_{W\le\ker A}\mu(0,W)\notag\\
 &=\sum_{W\le V}\mu(0,W)
   \sum_{A\in\mathcal L_W}\chi_U(A).                    \label{eq:mobius-expanded}
\end{align}

The inner sum is a character sum over the additive subspace $\mathcal L_W$.  Such a sum is $0$ unless the character is trivial on that
subspace, in which case it equals $|\mathcal L_W|$.  We now characterize triviality.  Under the entry wise matrix pairing, the rows of
$\mathcal L_W$ range independently over $W^\perp$.  Therefore
\[
  \langle U,A\rangle=0\quad\text{for every }A\in\mathcal L_W
\]
if and only if every row of $U$ belongs to $(W^\perp)^\perp=W$.  Equivalently,
\begin{equation}\label{eq:row-condition}
  \chi_U|_{\mathcal L_W}\text{ is trivial}
  \quad\Longleftrightarrow\quad
  \Row(U)\subseteq W.
\end{equation}

Let $R=\Row(U)$, so $\dim R=r$.  Substituting \eqref{eq:L-W-size}, \eqref{eq:subspace-mobius} and \eqref{eq:row-condition} into \eqref{eq:mobius-expanded} gives
\begin{equation}\label{eq:sum-over-W}
 \sum_{A\in\GL_n(q)}\chi_U(A)
 =\sum_{W\,\supseteq\,R}
  (-1)^{\dim W}q^{\binom{\dim W}{2}}q^{n(n-\dim W)}.
\end{equation}

The number of $k$-dimensional subspaces $W$ containing the fixed $r$-dimensional space $R$ is the Gaussian binomial coefficient
$\qbinom{n-r}{k-r}$.  Put $m=n-r$ and write $k=r+j$.  Then \eqref{eq:sum-over-W} becomes
\begin{align}
 &(-1)^r q^{\binom r2+n(n-r)}
 \sum_{j=0}^{m}
 \qbinom{m}{j}(-1)^j q^{\binom j2}(q^{r-n})^j.             \label{eq:q-binomial-sum}
\end{align}
The finite $q$-binomial theorem states that
\begin{equation}\label{eq:q-binomial-theorem}
  \sum_{j=0}^{m}\qbinom{m}{j}(-1)^j q^{\binom j2}x^j
  =\prod_{i=0}^{m-1}(1-xq^i).
\end{equation}
Taking $x=q^{r-n}=q^{-m}$ in \eqref{eq:q-binomial-theorem}, the sum in \eqref{eq:q-binomial-sum} equals
\[
  \prod_{i=0}^{m-1}(1-q^{-m+i})
  =\prod_{h=1}^{m}(1-q^{-h}).
\]
Consequently,
\begin{align*}
 \sum_{A\in\GL_n(q)}\chi_U(A)
 &=(-1)^r q^{\binom r2+n(n-r)}
   \prod_{h=1}^{n-r}(1-q^{-h})\\
 &=(-1)^r q^{\binom r2+n(n-r)-\binom{n-r+1}{2}}
   \prod_{h=1}^{n-r}(q^h-1).
\end{align*}
A direct simplification of the exponent gives
\[
  \binom r2+n(n-r)-\binom{n-r+1}{2}
  =\frac{n(n-1)}{2},
\]
which matches with \eqref{eq:matrix-gauss}.
\end{proof}

\begin{corollary}\label{cor:singular-character-sum}
If $U\ne0$ has rank $r$, then
\begin{equation}\label{eq:singular-character-sum}
  \sum_{S\in\cS_n(q)}\chi_U(S)
  =-(-1)^r a_r(n,q).
\end{equation}
\end{corollary}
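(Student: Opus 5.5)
The plan is to obtain the singular sum as a complement: subtract the invertible sum from the sum over the whole ring. Since $\Mat_n(q)$ is the disjoint union of $\cS_n(q)$ and $\GL_n(q)$,
\[
  \sum_{S\in\cS_n(q)}\chi_U(S)
  =\sum_{X\in\Mat_n(q)}\chi_U(X)-\sum_{A\in\GL_n(q)}\chi_U(A).
\]
The second term on the right is already evaluated by Theorem~\ref{thm:matrix-gauss}. It equals $(-1)^r a_r(n,q)$, because $U\ne0$ has rank $r\ge1$.

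The first term is the full character sum of $\chi_U$ over the finite abelian group $(\Mat_n(q),+)$. This sum vanishes as soon as $\chi_U$ is a nontrivial character. Recall that $U\mapsto\chi_U$ is injective, as noted after \eqref{eq:matrix-character}, and that $\chi_0$ is the trivial character. Hence $U\ne0$ forces $\chi_U$ to be nontrivial.

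To spell this out, I pick $X_0$ with $\chi_U(X_0)\ne1$. One can take $X_0=cE_{ij}$ with $u_{ij}\ne0$ and $c$ chosen so that $\psi(cu_{ij})\ne1$, which is possible since $\psi$ is nontrivial. Translation invariance of the sum over the whole group then gives
\[
  \chi_U(X_0)\,\Sigma=\Sigma,
\]
where $\Sigma$ denotes the full sum. It follows that $\Sigma=0$.

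Combining the two terms gives $\sum_{S\in\cS_n(q)}\chi_U(S)=0-(-1)^r a_r(n,q)$, which is the claimed identity \eqref{eq:singular-character-sum}. No step is a genuine obstacle here. The only point needing care is to justify that $\chi_U$ is nontrivial for $U\ne0$, and the explicit choice of $X_0$ above does this directly.
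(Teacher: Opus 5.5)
Your proof is correct and follows the paper's argument: write $\Mat_n(q)$ as the disjoint union of $\GL_n(q)$ and $\cS_n(q)$, note that the full sum of the nontrivial character $\chi_U$ vanishes, and invoke Theorem~\ref{thm:matrix-gauss}. Your explicit witness $X_0=cE_{ij}$ gives a more concrete reason why $\chi_U$ is nontrivial than the paper's appeal to injectivity of $U\mapsto\chi_U$, but the route is the same.
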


\begin{proof}
Since $U\ne0$, the character $\chi_U$ is nontrivial.  Character orthogonality on the additive group $\Mat_n(q)$ gives
\[
  \sum_{X\in\Mat_n(q)}\chi_U(X)=0.
\]
The matrix ring is the disjoint union of $\GL_n(q)$ and $\cS_n(q)$, so
\[
  \sum_{S\in\cS_n(q)}\chi_U(S)
  =-\sum_{A\in\GL_n(q)}\chi_U(A).
\]
Finally, we apply Theorem~\ref{thm:matrix-gauss} to get the result.
\end{proof}

\subsection{The total graph and the upper bounds}\label{sec:upper}

The total graph is the sum graph
\begin{equation}\label{eq:T-as-sum}
  T_n(q)=\operatorname{Sum}(\Mat_n(q),\cS_n(q)).
\end{equation}
We first determine its minimum degree and largest Laplacian eigenvalue.

\begin{lemma}\label{lem:degree}
The graph $T_n(q)$ has order $q^{n^2}$ and minimum degree
\[
  \delta(T_n(q))=z_n(q)-1.
\]
More precisely, if $q$ is even then every vertex has degree $z_n(q)-1$; if $q$ is odd, singular vertices have degree $z_n(q)-1$ and invertible vertices have degree $z_n(q)$.
\end{lemma}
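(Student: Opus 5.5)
The plan is to compute degrees directly from the sum-graph degree formula \eqref{eq:sum-degree}, using the identification \eqref{eq:T-as-sum} of $T_n(q)$ with $\operatorname{Sum}(\Mat_n(q),\cS_n(q))$. The order is immediate, since $|\Mat_n(q)|=q^{n^2}$. With $S=\cS_n(q)$ and $s=|S|=z_n(q)$, formula \eqref{eq:sum-degree} gives
\[
  d(X)=z_n(q)-\varepsilon_X,
  \qquad
  \varepsilon_X=1\iff 2X\in\cS_n(q).
\]
So the whole statement reduces to deciding, for each matrix $X$, whether $2X$ is singular.

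I will split into the two parity cases. If $q$ is even, then $\F_q$ has characteristic $2$, so $2X=0$ for every $X$. The zero matrix is singular because $n\ge 2$ (indeed $n\ge1$ suffices), so $\varepsilon_X=1$ for all $X$ and the graph is regular of degree $z_n(q)-1$. If $q$ is odd, then $2$ is a unit in $\F_q$, and
\[
  \det(2X)=2^n\det X.
\]
Thus $2X$ is singular exactly when $X$ is singular. Singular vertices therefore have degree $z_n(q)-1$, and invertible vertices have degree $z_n(q)$.

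For the minimum degree, I only need that singular vertices exist. The zero matrix is one, and more generally $z_n(q)>0$ by \eqref{eq:z-lower}. Hence $\delta(T_n(q))=z_n(q)-1$ in both cases.

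There is no real obstacle here. The only point needing care is the loop correction in \eqref{eq:sum-degree}: the excluded neighbor $h=X$ occurs precisely when $2X\in S$, and this is where the parity of $q$ enters.
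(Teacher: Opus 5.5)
Your proof is correct and takes essentially the same route as the paper. Both count the $z_n(q)$ potential neighbours $S-X$ and discard the loop exactly when $2X$ is singular, which happens for every $X$ when $q$ is even and exactly for singular $X$ when $q$ is odd. The differences are cosmetic: you cite the general formula \eqref{eq:sum-degree} rather than re-deriving it, you use $\det(2X)=2^n\det X$ where the paper uses rank preservation, and you note explicitly that singular vertices exist, so the minimum degree $z_n(q)-1$ is actually attained.
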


\begin{proof}
The order is $|\Mat_n(q)|=q^{n^2}$.  Fix $A\in\Mat_n(q)$.  For each $S\in\cS_n(q)$ there is a unique matrix
\[
  B=S-A
\]
with $A+B=S$ singular.  Thus there are $z_n(q)$ potential neighbors.  The only possible forbidden one is $B=A$, because the graph has no loops.  This occurs exactly when $S=2A$ is singular.  Therefore
\[
  d_{T_n(q)}(A)=
  \begin{cases}
  z_n(q)-1,&2A\text{ is singular},\\
  z_n(q),&2A\text{ is invertible}.
  \end{cases}
\]

If $q$ is even, then $2A=0$ is singular for every $A$, so all degrees equal $z_n(q)-1$.  If $q$ is odd, multiplication by the nonzero scalar $2$ preserves rank, and hence $2A$ is singular exactly when $A$ is singular. Thus the asserted degree description and minimum degree follow.
\end{proof}

\begin{theorem}\label{thm:T-spectrum}
Apart from the eigenvalue $0$, the distinct Laplacian eigenvalues of
$T_n(q)$ are
\begin{equation}\label{eq:spectrum-even}
  z_n(q)+(-1)^r a_r(n,q),
  \qquad 1\le r\le n,
\end{equation}
when $q$ is even, and
\begin{equation}\label{eq:spectrum-odd}
  z_n(q)\pm a_r(n,q),
  \qquad 1\le r\le n,
\end{equation}
when $q$ is odd.  Consequently,
\begin{equation}\label{eq:lambda-max}
  \lambda_{\max}(T_n(q))=
  \begin{cases}
  z_n(q)+a_2(n,q),&q\text{ even},\\
  z_n(q)+a_1(n,q),&q\text{ odd}.
  \end{cases}
\end{equation}
\end{theorem}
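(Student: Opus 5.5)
Our plan is to apply Theorem~\ref{thm:sum-spectrum} to the description \eqref{eq:T-as-sum}, with $H=\Mat_n(q)$ and $S=\cS_n(q)$, so that $s=z_n(q)$. The characters are the $\chi_U$ of \eqref{eq:matrix-character}. For $U\ne0$ of rank $r$, Corollary~\ref{cor:singular-character-sum} gives $\sigma_{\chi_U}=-(-1)^r a_r(n,q)$, which is a real integer. For $U=0$ we have $\sigma=z_n(q)$, and this yields the eigenvalue $s-\sigma=0$. So the whole computation reduces to sorting the nontrivial characters into real characters and conjugate pairs, using \eqref{eq:character-conjugate}.

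\emph{Even $q$.} Here $-U=U$, so every $\chi_U$ is real; indeed $\psi$ takes values $\pm1$ in characteristic $2$. Each $U$ of rank $r\ge1$ therefore contributes $s-\sigma_{\chi_U}=z_n(q)+(-1)^r a_r(n,q)$. This gives exactly the list \eqref{eq:spectrum-even}, and every $r$ with $1\le r\le n$ occurs since matrices of each rank exist.

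\emph{Odd $q$.} Here $U\ne-U$ for $U\ne0$. We must still decide whether $\chi_U$ is real. Since $\sigma_{\chi_U}$ is real, the degenerate case does not matter: if $\chi_U$ is nonreal, the pair $\{\chi_U,\chi_{-U}\}$ contributes $z_n(q)\pm|\sigma|=z_n(q)\pm a_r(n,q)$. To be safe, we note that $\chi_U$ is real only if $\chi_U=\chi_{-U}$, i.e.\ $U=-U$, which fails for odd $q$. Hence all nontrivial characters pair off. Both signs then appear for every rank $r$, giving \eqref{eq:spectrum-odd}.

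It remains to identify the maximum, which is the step needing care. For odd $q$, $a_r$ is strictly decreasing in $r$, because $a_r/a_{r+1}=q^{n-r}-1\ge1$ with equality only when $r=n-1$ and $q=2$, which is excluded for odd $q$. Hence $\max_r\,(z+a_r)=z+a_1$.

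For even $q$, the candidates with a plus sign are $z+a_r$ with $r$ even, and the largest of these is $z+a_2$, since $a_r$ is nonincreasing in $r$. We must also check that the odd-$r$ values $z-a_r$ lie below it; they are at most $z<z+a_2$ because $a_r>0$. This needs $n\ge2$ so that rank $2$ exists, which holds by assumption. Finally, $0$ is not the maximum, since $z+a_2>0$. This establishes \eqref{eq:lambda-max}.

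The main subtlety is the case $q=2$, where $a_{n-1}=a_n$; this only affects distinctness of the values, not the maximum.
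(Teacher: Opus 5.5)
Your proposal is correct and follows essentially the same route as the paper. You apply Theorem~\ref{thm:sum-spectrum} with $s=z_n(q)$, take $\sigma_{\chi_U}$ from Corollary~\ref{cor:singular-character-sum}, split according to whether the characters are real, and read off the maximum from $a_1\ge a_2\ge\cdots\ge a_n>0$. The one variation is how you rule out nontrivial real characters for odd $q$: you use $\overline{\chi_U}=\chi_{-U}$ together with injectivity of $U\mapsto\chi_U$, where the paper uses an odd-order argument, and both are valid. Your closing worry about $q=2$ is moot: ranks $n-1$ and $n$ carry opposite signs, so no two listed values coincide.
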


\begin{proof}
We apply Theorem~\ref{thm:sum-spectrum} to the sum-graph description \eqref{eq:T-as-sum}.  Here $s=|\cS_n(q)|=z_n(q)$.  The trivial character $\chi_0$ is real and has
\[
  \sigma_{\chi_0}=z_n(q),
\]
so it contributes the expected eigenvalue $z_n(q)-z_n(q)=0$.

Now let $U\ne0$ have rank $r$.  Corollary \ref{cor:singular-character-sum} gives
\begin{equation}\label{eq:sigma-U}
  \sigma_{\chi_U}=-(-1)^r a_r(n,q).
\end{equation}

Suppose first that $q$ is even.  The additive group of $\Mat_n(q)$ has exponent two: $X+X=0$ for every $X$.  Hence every character satisfies
\[
  \chi_U(X)^2=\chi_U(2X)=1,
\]
so every character is real-valued.  The real-character part of
Theorem~\ref{thm:sum-spectrum} and \eqref{eq:sigma-U} yield
\[
  z_n(q)-\sigma_{\chi_U}
  =z_n(q)+(-1)^r a_r(n,q),
\]
which proves \eqref{eq:spectrum-even}.

Suppose next that $q$ is odd.  The additive group $\Mat_n(q)$ has odd order.  A real character takes values in $\mathbb R\cap\{z\in\mathbb C:|z|=1\}=\{1,-1\}$.  Its image is therefore a group of order at most two.  Since the domain has odd order, the image must be trivial.  Thus the only real character is $\chi_0$, and every nontrivial character belongs to a nonreal conjugate pair.  By Theorem~\ref{thm:sum-spectrum} and \eqref{eq:sigma-U}, that pair contributes
\[
  z_n(q)\pm|\sigma_{\chi_U}|
  =z_n(q)\pm a_r(n,q),
\]
which proves \eqref{eq:spectrum-odd}.

It remains to identify the largest value.  From \eqref{eq:a-definition}, for $1\le r<n$,
\begin{equation}\label{eq:a-ratio}
  \frac{a_r(n,q)}{a_{r+1}(n,q)}=q^{n-r}-1\ge1.
\end{equation}
Thus $a_1\ge a_2\ge\cdots\ge a_n>0$.

For odd $q$, the largest value in \eqref{eq:spectrum-odd} is plainly $z_n(q)+a_1(n,q)$.  For even $q$, a positive addition in \eqref{eq:spectrum-even} occurs exactly when $r$ is even.  Since $n\ge2$, the smallest permitted even rank is $r=2$, and monotonicity
\eqref{eq:a-ratio} shows that $a_2$ is the largest positive contribution. Hence the largest eigenvalue is $z_n(q)+a_2(n,q)$.  This proves \eqref{eq:lambda-max}.
\end{proof}

We can now prove the upper half of the main theorem.

\begin{proof}[Proof of Theorem~\ref{thm:main}]
Proposition~\ref{prop:lower} gives
\[
  \diss(\Gamma_n(q))\ge q^n-1
  \qquad\text{and}\qquad
  \diss(T_n(q))\ge q^n.
\]
For the upper bound on the total graph, we apply Theorem~\ref{thm:k-laplacian} with $G=T_n(q)$ and $k=1$.  By Lemma~\ref{lem:degree}, the graph has order $q^{n^2}$ and minimum degree $z_n(q)-1$.  Therefore
\begin{align}
 \diss(T_n(q))
 &\le q^{n^2}\left(
 1-\frac{(z_n(q)-1)-1}{\lambda_{\max}(T_n(q))}
 \right)\notag\\
 &=q^{n^2}\left(
 1-\frac{z_n(q)-2}{\lambda_{\max}(T_n(q))}
 \right).                                                   \label{eq:T-general-upper}
\end{align}
If $q$ is even, Theorem~\ref{thm:T-spectrum} gives $\lambda_{\max}=z_n(q)+a_2(n,q)$.  Substituting into \eqref{eq:T-general-upper} yields
\begin{align*}
 \diss(T_n(q))
 &\le q^{n^2}\left(
 1-\frac{z_n(q)-2}{z_n(q)+a_2(n,q)}
 \right)\\
 &=q^{n^2}
 \frac{a_2(n,q)+2}{z_n(q)+a_2(n,q)}.
\end{align*}
Because the left-hand side is an integer, it is at most the floor of the right-hand side.  This is the even case of $U_n(q)$. If $q$ is odd, the same calculation with
$\lambda_{\max}=z_n(q)+a_1(n,q)$ gives
\[
 \diss(T_n(q))
 \le q^{n^2}
 \frac{a_1(n,q)+2}{z_n(q)+a_1(n,q)},
\]
and taking the floor gives the odd case of $U_n(q)$.

Finally, \eqref{eq:induced-subgraph} shows that every dissociation set in $\Gamma_n(q)$ induces exactly the same edges when viewed as a vertex set of $T_n(q)$.  Hence it is also a dissociation set in $T_n(q)$, and
\[
  \diss(\Gamma_n(q))\le\diss(T_n(q)).
\]
All assertions of Theorem~\ref{thm:main} now follow.
\end{proof}

\begin{proof}[Proof of Corollary~\ref{cor:uniform}]
Put
\[
  N=n^2,
  \qquad
  Q=q^{N-n}=q^{n^2-n}.
\]
We first obtain a parity-free upper estimate for the largest eigenvalue. From \eqref{eq:a-definition},
\begin{align*}
 a_1(n,q)
 &=q^{n(n-1)/2}\prod_{i=1}^{n-1}(q^i-1)\\
 &<q^{n(n-1)/2}\prod_{i=1}^{n-1}q^i\\
 &=q^{n(n-1)/2+n(n-1)/2}\\
 &=q^{n^2-n}=Q.
\end{align*}
Also $a_2(n,q)\le a_1(n,q)$.  In either parity case, Theorem~\ref{thm:T-spectrum} therefore implies
\begin{equation}\label{eq:lambda-uniform}
  \lambda_{\max}(T_n(q))<z_n(q)+Q.
\end{equation}

Since $z_n(q)>q^{N-1}\ge8$, the number $z_n(q)-2$ is positive.  Starting from \eqref{eq:T-general-upper} and using
\eqref{eq:lambda-uniform}, we get
\begin{align}
 \frac{\diss(T_n(q))}{q^N}
 &\le1-\frac{z_n(q)-2}{\lambda_{\max}(T_n(q))}\notag\\
 &<1-\frac{z_n(q)-2}{z_n(q)+Q}\notag\\
 &=\frac{Q+2}{z_n(q)+Q}.                                 \label{eq:uniform-step-one}
\end{align}
Using Lemma~\ref{lem:matrix-counts} we have $z_n(q)>q^{N-1}$, so
\begin{equation}\label{eq:uniform-step-two}
  \frac{Q+2}{z_n(q)+Q}
  <\frac{Q+2}{q^{N-1}+Q}.
\end{equation}

We claim that the final fraction is at most $q^{-(n-1)}$.  All quantities are positive, so this is equivalent to
\[
  q^{n-1}(Q+2)\le q^{N-1}+Q.
\]
Now
\[
  q^{n-1}Q=q^{n-1}q^{N-n}=q^{N-1},
\]
and
\[
  \frac{Q}{q^{n-1}}
  =q^{N-2n+1}=q^{(n-1)^2}\ge2.
\]
Thus $2q^{n-1}\le Q$, proving the claim.  Combining \eqref{eq:uniform-step-one} and \eqref{eq:uniform-step-two},
\[
  \frac{\diss(T_n(q))}{q^N}<q^{-(n-1)}.
\]
Multiplication by $q^N$ gives
\[
  \diss(T_n(q))<q^{N-n+1}.
\]
Since the dissociation number is an integer,
\[
  \diss(T_n(q))\le q^{N-n+1}-1.
\]
The inequality for $\Gamma_n(q)$ follows from $\diss(\Gamma_n(q))\le\diss(T_n(q))$.
\end{proof}

\begin{proof}[Proof of Corollary~\ref{cor:asymptotic}]
Lemma~\ref{lem:matrix-counts} gives
\[
  z_n(q)=q^{n^2-1}(1+O_n(q^{-1})).
\]
For fixed $n$, factor the highest power of $q$ from the products in
\eqref{eq:a-definition}.  For $r=1$,
\begin{align*}
 a_1(n,q)
 &=q^{n(n-1)/2}
   q^{1+2+\cdots+(n-1)}
   \prod_{i=1}^{n-1}(1-q^{-i})\\
 &=q^{n^2-n}(1+O_n(q^{-1})).
\end{align*}
For $r=2$,
\begin{align*}
 a_2(n,q)
 &=q^{n(n-1)/2}
   q^{1+2+\cdots+(n-2)}
   \prod_{i=1}^{n-2}(1-q^{-i})\\
 &=q^{(n-1)^2}(1+O_n(q^{-1})).
\end{align*}
The second formula remains valid for $n=2$, when the product is empty.

For odd $q$, substitute the asymptotics for $z_n(q)$ and $a_1(n,q)$ into
the second line of \eqref{eq:U-definition}.  Since $a_1(n,q)=o(z_n(q))$
and $2=o(a_1(n,q))$,
\[
 q^{n^2}\frac{a_1(n,q)+2}{z_n(q)+a_1(n,q)}
 =(1+O_n(q^{-1}))q^{n^2-n+1}.
\]
For even $q$, the same argument with $a_2(n,q)$ gives
\[
 q^{n^2}\frac{a_2(n,q)+2}{z_n(q)+a_2(n,q)}
 =(1+O_n(q^{-1}))q^{(n-1)^2+1}
 =(1+O_n(q^{-1}))q^{n^2-2n+2}.
\]
Taking a floor changes either quantity by less than one and therefore does
not affect its asymptotic form.
\end{proof}

\section{Examples and concluding remarks}\label{sec:examples}

The next table lists the field-construction lower bound, the number of
singular matrices, the largest Laplacian eigenvalue, and the explicit upper
bound for several small parameters.  Each entry follows directly from
\eqref{eq:GL-count}, \eqref{eq:a-definition}, \eqref{eq:lambda-max}, and
\eqref{eq:U-definition}.

\begin{center}
\begin{tabular}{ccrrrr}
\toprule
$n$&$q$&$q^n-1$&$z_n(q)$&$\lambda_{\max}(T_n(q))$&$U_n(q)$\\
\midrule
2&2&3&10&12&5\\
2&3&8&33&39&16\\
2&4&15&76&80&19\\
3&2&7&344&352&14\\
\bottomrule
\end{tabular}
\end{center}

The spectral bound is not always exact.  In the smallest case the regular
graph itself can be recognized explicitly.

\begin{proposition}\label{prop:small}
The graph $\Gamma_2(2)$ is isomorphic to $K_{3,3}$.  Consequently,
\[
  \diss(\Gamma_2(2))=3.
\]
\end{proposition}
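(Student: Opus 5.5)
Over $\F_2$ a matrix is invertible exactly when it has nonzero determinant. By \eqref{eq:GL-count}, $|\GL_2(2)|=(4-1)(4-2)=6$, so $\Gamma_2(2)$ has $6$ vertices. The plan is to list them explicitly, determine which pairs have singular sum, recognize a bipartition with all cross edges and no inner edges, and then compute $\diss(K_{3,3})$ directly.

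The six invertible matrices are
\[
I=\begin{pmatrix}1&0\\0&1\end{pmatrix},\;
P=\begin{pmatrix}0&1\\1&0\end{pmatrix},\;
\begin{pmatrix}1&1\\0&1\end{pmatrix},\;
\begin{pmatrix}1&0\\1&1\end{pmatrix},\;
\begin{pmatrix}0&1\\1&1\end{pmatrix},\;
\begin{pmatrix}1&1\\1&0\end{pmatrix}.
\]
We split them by trace: the trace-$0$ matrices are $I$, $P$ and the two unipotent ones, and the trace-$1$ matrices are the last two (the elements of order $3$). This trace split gives $4+2$, not $3+3$, so we need a different invariant.

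We use Proposition~\ref{prop:lower}. There $\iota(\F_4^\times)$ is an independent set of size $3$, namely $\{I,\;C,\;C^2\}$ with $C=\begin{pmatrix}0&1\\1&1\end{pmatrix}$ and $C^2=\begin{pmatrix}1&1\\1&0\end{pmatrix}$. The complement is $\{P,\ \begin{pmatrix}1&1\\0&1\end{pmatrix},\ \begin{pmatrix}1&0\\1&1\end{pmatrix}\}$, the three involutions other than $I$. Each pairwise sum within the complement has exactly one row or column of zeros, hence is singular. So the complement looks like a triangle, not an independent set.

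This suggests recomputing the cross sums. For example, $I+P=\begin{pmatrix}1&1\\1&1\end{pmatrix}$ is singular, while $I+C=\begin{pmatrix}1&1\\1&0\end{pmatrix}$ is invertible. The main obstacle is this explicit check, so the edge set will be recorded carefully by computing all fifteen sums $A+B$. We expect to find that the graph is in fact $K_3\sqcup\overline{K_3}$ plus cross edges. Each cross sum is computed entrywise over $\F_2$, and its determinant is $ad+bc \bmod 2$.

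If the computation contradicts the claimed isomorphism with $K_{3,3}$, we will still aim to determine $\diss=3$ from the actual edge set. Proposition~\ref{prop:lower} gives $\diss\ge 3$. For the upper bound, we would show that every $4$-set contains a vertex with two neighbours inside it.

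If the graph is indeed $K_{3,3}$, the upper bound is immediate. A $4$-subset of $K_{3,3}$ meets the parts in sizes $(2,2)$, $(3,1)$ or $(4,0)$, and $(4,0)$ is impossible since the parts have size $3$. In case $(2,2)$ the induced graph is a $4$-cycle. In case $(3,1)$ it is a star $K_{1,3}$. Either way some vertex has degree at least $2$, so $\diss\le3$. The set $\iota(\F_4^\times)$, or one whole part, attains $3$.
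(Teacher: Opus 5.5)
Your proposal has a genuine gap: it never determines the edge set of $\Gamma_2(2)$, so the main assertion $\Gamma_2(2)\cong K_{3,3}$ is not proved, and the one structural claim you do make is false. Write $U_1=\begin{pmatrix}1&1\\0&1\end{pmatrix}$ and $U_2=\begin{pmatrix}1&0\\1&1\end{pmatrix}$. You assert that the pairwise sums within $\{P,U_1,U_2\}$ each have a zero row or column and are therefore singular. In fact
\[
  P+U_1=U_2,\qquad P+U_2=U_1,\qquad U_1+U_2=P,
\]
so none of these sums has a zero row or column, and all are invertible. Thus $\{P,U_1,U_2\}$ is independent, exactly like $\{I,C,C^2\}$, where similarly $I+C=C^2$, $I+C^2=C$ and $C+C^2=I$. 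Your predicted structure ``$K_3\sqcup\overline{K_3}$ plus cross edges'' contains a triangle, so it would contradict the proposition itself. The proposal then defers to an uncomputed check and hedges on its outcome. Because your bound $\diss\le 3$ is derived only under the hypothesis ``if the graph is indeed $K_{3,3}$'', neither half of the statement is established as written.

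To close the gap you need an argument that actually fixes the edges. One clean way is to factor $\det(A+B)=\det A\,\det(I+A^{-1}B)$. In characteristic $2$, the matrix $I+g$ is singular exactly when $g$ has eigenvalue $1$. Among the elements $g\ne I$ of $\GL_2(2)$, this happens precisely for $g\in\{P,U_1,U_2\}$; the matrices $C$ and $C^2$ have characteristic polynomial $x^2+x+1$. Hence each vertex $A$ is adjacent exactly to $AP$, $AU_1$ and $AU_2$, so the graph is $3$-regular. Now use the independence of $\iota(\F_4^\times)=\{I,C,C^2\}$ from Proposition~\ref{prop:lower}. Each of $I,C,C^2$ must then be adjacent to all of $P,U_1,U_2$. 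These nine edges already give every vertex degree $3$, so there are no further edges and the graph is $K_{3,3}$. Alternatively, you can verify directly that all nine cross sums are singular, e.g.\ $C+P=\begin{pmatrix}0&0\\0&1\end{pmatrix}$. The regularity argument is essentially the paper's route: it identifies $\GL_2(2)\cong S_3$ and $\Gamma_2(2)$ with the Cayley graph of $S_3$ on its transpositions, which is bipartite by parity. Once the isomorphism is in place, your $4$-subset analysis (each $4$-subset induces $C_4$ or $K_{1,3}$) correctly gives $\diss\le 3$.
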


\begin{proof}
The group $\GL_2(2)$ has order
\[
  (2^2-1)(2^2-2)=3\cdot2=6.
\]
It acts on the three nonzero vectors of $\F_2^2$.  This action is faithful:
an invertible linear map fixing all three nonzero vectors fixes a basis and
is therefore the identity.  Hence the action identifies
$\GL_2(2)$ with the symmetric group $S_3$.

For distinct $A,B\in\GL_2(2)$, factor
\[
  A+B=A(I+A^{-1}B).
\]
Since $A$ is invertible,
\[
  \det(A+B)=0
  \quad\Longleftrightarrow\quad
  \det(I+A^{-1}B)=0.
\]
Put $g=A^{-1}B$.  Because $A\ne B$, we have $g\ne I$.  The matrix $I+g$
is singular in characteristic two exactly when $g$ has eigenvalue $1$.

There are three elements of order two and two elements of order three in
$\GL_2(2)\cong S_3$.  If $g$ has order two, then
\[
  g^2-I=(g-I)^2=0
\]
in characteristic two, so $1$ is an eigenvalue.  If $g$ has order three,
then its minimal polynomial is $x^2+x+1$, which has no root $1$ over
$\F_2$.  Thus the connection set consists exactly of the three elements of
order two, corresponding under the isomorphism with $S_3$ to the three
transpositions.

It follows that $\Gamma_2(2)$ is the Cayley graph of $S_3$ generated by all
transpositions.  Multiplication by a transposition changes permutation
parity, so every edge joins an even permutation to an odd permutation.
Conversely, if $\sigma$ is even and $\tau$ is odd, then
$\sigma^{-1}\tau$ is an odd element of $S_3$, hence one of the three
transpositions.  Therefore every even permutation is adjacent to every odd
permutation.  There are three of each, proving
\[
  \Gamma_2(2)\cong K_{3,3}.
\]

Finally, let a dissociation set in $K_{3,3}$ contain $a$ vertices from one
part and $b$ from the other.  If $a,b>0$, every selected vertex in the first
part has induced degree $b$ and every selected vertex in the second part has
induced degree $a$.  Maximum degree at most one then forces $a\le1$ and
$b\le1$, so the set has order at most two.  If it meets only one part, it
has order at most three, and a full part is an independent set of order
three.  Hence $\diss(K_{3,3})=3$.
\end{proof}

\subsection{Concluding remarks}

In this paper, we studied the dissociation number of the regular graph
$\Gamma_n(q)$ associated with the matrix ring $\Mat_n(\F_q)$.  By embedding
$\F_{q^n}$ into $\Mat_n(\F_q)$ through its regular representation, we
constructed a dissociation set of order $q^n-1$ in $\Gamma_n(q)$ and one of
order $q^n$ in the total graph $T_n(q)$.  The induced graph arising from
this construction is independent in even characteristic and is a matching
in odd characteristic.

For the upper bounds, we proved a general Laplacian inequality for
$k$-independent sets and specialized it to dissociation sets.  We then
described the Laplacian spectrum of finite abelian sum graphs and evaluated
the required character sums over $\Mat_n(\F_q)$.  Applying these results to
$T_n(q)$ produced an explicit parity-sensitive bound and hence the
two-sided estimate
\[
  q^n-1\le\diss(\Gamma_n(q))\le U_n(q),
\]
as well as the uniform estimate
\[
  \diss(\Gamma_n(q))\le q^{n^2-n+1}-1.
\]
For fixed $n$, the explicit upper bound has order
$q^{n^2-2n+2}$ when $q$ is even and order $q^{n^2-n+1}$ when $q$ is odd.
This distinction follows from the rank of the character that produces the
largest Laplacian eigenvalue: rank two in even characteristic and rank one
in odd characteristic.  Finally, the small-parameter analysis gives the
exact value $\diss(\Gamma_2(2))=3$.

\section*{Declaration on the use of AI}
The author used generative AI tools to assist in discussing proof strategies, checking proofs, and improving the exposition. The author takes full responsibility for the mathematical arguments, results, and conclusions, all of which were carefully reviewed and verified by the author.

\section*{Declaration of competing interest}

The author declares no known competing financial interests or personal relationships that could have appeared to influence the work reported in this article.


\section*{Data availability}

Data sharing is not applicable to this article as no datasets were generated or analyzed during the current study.

\end{document}